\theoremstyle{definition}
\newtheorem{thm}[equation]{Theorem}
\newtheorem{lem}[equation]{Lemma}
\theoremstyle{remark}
\newtheorem*{rem}{Remark}
\begin{document}

\title{Tiling with Cuisenaire rods}
\author{M. Connolly}

\begin{abstract} In this paper a closed form expression for the number of tilings of an $n\times n$ square border with $1\times 1$ and $2\times1$ cuisenaire rods is proved using a transition matrix approach.  This problem is then generalised to $m\times n$ rectangular borders. The number of distinct tilings up to rotational symmetry is considered, and closed form expressions are given, in the case of a square border and in the case of a rectangular border. Finally, the number of distinct tilings up to dihedral symmetry is considered, and a closed form expression is given in the case of a square border.
\end{abstract}
\maketitle

\FloatBarrier
\section{Introduction to the problem}

Tilings by dominoes, or tiles of dimensions $2\times 1$, and $1\times 1$ square tiles have been widely studied. For example, it is famously true that the number of tilings of a $1\times n$ rectangle with dominoes and squares is equal to the $n+1$-th term of the Fibonacci sequence \cite{BenjaminQuinn}.

Furthermore, in a landmark paper Fisher and Temperley \cite{TemperleyFisher}, and independently in the same year Kastelyn \cite{Kastelyn}, prove an exact formula for the number of tilings of a $2m\times 2n$ rectangle by dominoes.

The main problem that will be discussed in this article is that of tiling a square border with cuisenaire rods of sizes $1$ and $2$. That is, we will use tiles of dimension $1\times 1$ and $2\times 1$ to build a square of size $n\times n$ with a square hole of size $n-2\times n-2$. Below are examples for $n=3$ and $n=4$.

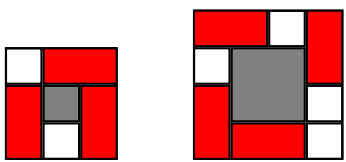
\begin{figure}[h]
\centering
\begin{pspicture}(0,0)(4.5,2) 
\psframe[dimen=outer](0,0)(1.5,1.5)
\psframe[dimen=outer,fillstyle=solid,fillcolor=gray](0.5,0.5)(1,1)

\psframe[dimen=outer](0,1)(0.5,1.5)
\psframe[dimen=outer,fillstyle=solid,fillcolor=red](0.5,1)(1.5,1.5)
\psframe[dimen=outer,fillstyle=solid,fillcolor=red](1,0)(1.5,1)
\psframe[dimen=outer](0.5,0)(1,0.5)
\psframe[dimen=outer,fillstyle=solid,fillcolor=red](0,0)(0.5,1)

\psframe[dimen=outer](2.5,0)(4.5,2)
\psframe[dimen=outer,fillstyle=solid,fillcolor=gray](3,0.5)(4,1.5)
\psframe[dimen=outer,fillstyle=solid,fillcolor=red](2.5,1.5)(3.5,2)
\psframe[dimen=outer](3.5,1.5)(4,2)
\psframe[dimen=outer,fillstyle=solid,fillcolor=red](4,1)(4.5,2)
\psframe[dimen=outer](4,0.5)(4.5,1)
\psframe[dimen=outer](4,0)(4.5,0.5)
\psframe[dimen=outer,fillstyle=solid,fillcolor=red](3,0)(4,0.5)
\psframe[dimen=outer,fillstyle=solid,fillcolor=red](2.5,0)(3,1)
\psframe[dimen=outer](2.5,1)(3,1.5)

\end{pspicture}
\caption{Two examples of a square border tiling}
\label{fig:tiling0}
\end{figure}

 We explore how to calculate the number of such coverings for any natural number $n$. This problem first came to the author's attention in an article written for the Association of Teachers of Mathematics \cite{Zarzycki}. 

\pagebreak

\FloatBarrier

\section{Transition Matrix Approach}

First let us consider a tiling of a square border as a sequence of $1\times 1$ squares from the top left corner of the square clockwise around the border. We can imagine this sequence of squares laid out in a line from beginning to end as in figure \ref{fig:convention} below.

 As one traverses the border our orientation changes through one full revolution. To reflect the one-dimensional nature of the path we are only interested in closure to the front or back of the direction of travel. Hence square $1$ appears to be open to the bottom on the square border, but from the perspective of someone walking from $1$ to $8$, square number $1$ would be open to the rear of the direction of travel, and so appears open to the left when the squares are laid out end-to-end.

\begin{figure}[h]
\centering
\caption{Traversing the square border}
\begin{pspicture}(0,0)(10,2) 
\psframe[dimen=outer](0,0)(1.5,1.5)
\psframe[dimen=outer,fillstyle=solid,fillcolor=gray](0.5,0.5)(1,1)

\psframe[dimen=outer,fillstyle=solid,fillcolor=red](0,0.5)(0.5,1.5)
\rput(0.25,1.25){$1$}
\psframe[dimen=outer](0.5,1)(1,1.5)
\psframe[dimen=outer,fillstyle=solid,fillcolor=red](1,0.5)(1.5,1.5)
\psframe[dimen=outer](1,0)(1.5,0.5)
\psframe[dimen=outer,fillstyle=solid,fillcolor=red](0,0)(1,0.5)

\rput(0.25,1.25){$1$}
\rput(0.75,1.25){$2$}
\rput(1.25,1.25){$3$}
\rput(1.25,0.75){$4$}
\rput(1.25,0.25){$5$}
\rput(0.75,0.25){$6$}
\rput(0.25,0.25){$7$}
\rput(0.25,0.75){$8$}

\psline(2.5,0)(3,0)
\psline(3,0)(3,0.5)
\psline(3,0.5)(2.5,0.5)

\psline(3.5,0)(4,0)
\psline(4,0)(4,0.5)
\psline(4,0.5)(3.5,0.5)
\psline(3.5,0)(3.5,0.5)

\psline(4.5,0)(5,0)
\psline(5,0.5)(4.5,0.5)
\psline(4.5,0)(4.5,0.5)

\psline(5.5,0)(6,0)
\psline(6,0)(6,0.5)
\psline(6,0.5)(5.5,0.5)

\psline(6.5,0)(7,0)
\psline(7,0)(7,0.5)
\psline(7,0.5)(6.5,0.5)
\psline(6.5,0)(6.5,0.5)

\psline(7.5,0)(8,0)
\psline(8,0.5)(7.5,0.5)
\psline(7.5,0)(7.5,0.5)

\psline(8.5,0)(9,0)
\psline(9,0)(9,0.5)
\psline(9,0.5)(8.5,0.5)

\psline(9.5,0)(10,0)
\psline(10,0.5)(9.5,0.5)
\psline(9.5,0)(9.5,0.5)

\rput(2.75,0.22){$1$}
\rput(3.75,0.22){$2$}
\rput(4.75,0.22){$3$}
\rput(5.75,0.22){$4$}
\rput(6.75,0.22){$5$}
\rput(7.75,0.22){$6$}
\rput(8.75,0.22){$7$}
\rput(9.75,0.22){$8$}
\end{pspicture}
\label{fig:convention}
\end{figure}
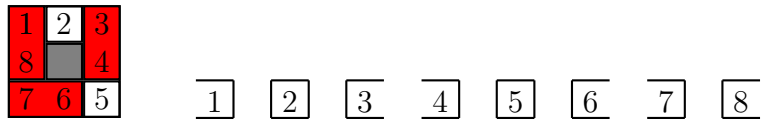

Using this convention one can identify three possible states for each square as you traverse the border. The first state is a closed $1\times 1$ square, the second is open on the left and the third open on the right. In the digraph of figure \ref{fig:transition} below, an arrow indicates that a state on the left of the arrow may precede a state to the right of the arrowhead as one traverses the border using the above convention.

\begin{figure}[h]
\centering
\caption{State transition}
\begin{pspicture}(0,0)(4,5) 
\psframe[dimen=outer](0,4)(1,5)

\psline(0,2)(1,2)
\psline(1,2)(1,3)
\psline(1,3)(0,3)

\psline(0,0)(1,0)
\psline(1,1)(0,1)
\psline(0,0)(0,1)

\psframe[dimen=outer](3,4)(4,5)

\psline(3,2)(4,2)
\psline(4,2)(4,3)
\psline(4,3)(3,3)

\psline(3,0)(4,0)
\psline(4,1)(3,1)
\psline(3,0)(3,1)

\psline{->}(1.25,4.75)(2.75,4.75)
\psline{->}(1.25,4.5)(2.75,0.75)

\psline{->}(1.25,2.75)(2.75,4.5)
\psline{->}(1.25,2.5)(2.75,0.5)

\psline{->}(1.25,0.5)(2.75,2.5)

\end{pspicture}

\label{fig:transition}
\end{figure}
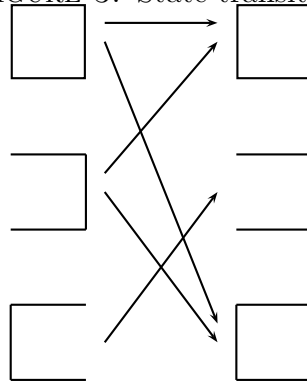

From this we can write down the state transition adjacency matrix as follows:

$$M=
\begin{bmatrix} 
1 & 0 & 1 \\
1 & 0 & 1 \\
0 & 1 & 0
\end{bmatrix}.$$

Now of course this is only for one step of the journey around the square. In our problem we are looking to enumerate certain paths in the following trellis graph of figure \ref{fig:trellis}.

\begin{figure}[h]
\centering
\caption{Trellis Graph}
\begin{pspicture}(0,0)(10,2.5) 
\psframe[dimen=outer](0,2)(0.5,2.5)

\psline(0,1)(0.5,1)
\psline(0.5,1)(0.5,1.5)
\psline(0.5,1.5)(0,1.5)

\psline(0,0)(0.5,0)
\psline(0.5,0.5)(0,0.5)
\psline(0,0)(0,0.5)

\psframe[dimen=outer](1.5,2)(2,2.5)

\psline(1.5,1)(2,1)
\psline(2,1)(2,1.5)
\psline(2,1.5)(1.5,1.5)

\psline(1.5,0)(2,0)
\psline(2,0.5)(1.5,0.5)
\psline(1.5,0)(1.5,0.5)

\psline{->}(0.625,2.375)(1.375,2.375)
\psline{->}(0.625,2.25)(1.375,0.375)

\psline{->}(0.625,1.375)(1.375,2.25)
\psline{->}(0.625,1.25)(1.375,0.25)

\psline{->}(0.625,0.25)(1.375,1.25)

\psframe[dimen=outer](3,2)(3.5,2.5)

\psline(3,1)(3.5,1)
\psline(3.5,1)(3.5,1.5)
\psline(3.5,1.5)(3,1.5)

\psline(3,0)(3.5,0)
\psline(3.5,0.5)(3,0.5)
\psline(3,0)(3,0.5)

\psline{->}(2.125,2.375)(2.875,2.375)
\psline{->}(2.125,2.25)(2.875,0.375)

\psline{->}(2.125,1.375)(2.875,2.25)
\psline{->}(2.125,1.25)(2.875,0.25)

\psline{->}(2.125,0.25)(2.875,1.25)

\psframe[dimen=outer](4.5,2)(5,2.5)

\psline(4.5,1)(5,1)
\psline(5,1)(5,1.5)
\psline(5,1.5)(4.5,1.5)

\psline(4.5,0)(5,0)
\psline(5,0.5)(4.5,0.5)
\psline(4.5,0)(4.5,0.5)

\psline{->}(3.625,2.375)(4.375,2.375)
\psline{->}(3.625,2.25)(4.375,0.375)

\psline{->}(3.625,1.375)(4.375,2.25)
\psline{->}(3.625,1.25)(4.375,0.25)

\psline{->}(3.625,0.25)(4.375,1.25)

\psframe[dimen=outer](6,2)(6.5,2.5)

\psline(6,1)(6.5,1)
\psline(6.5,1)(6.5,1.5)
\psline(6.5,1.5)(6,1.5)

\psline(6,0)(6.5,0)
\psline(6.5,0.5)(6,0.5)
\psline(6,0)(6,0.5)

\psline{->}(5.125,2.375)(5.875,2.375)
\psline{->}(5.125,2.25)(5.875,0.375)

\psline{->}(5.125,1.375)(5.875,2.25)
\psline{->}(5.125,1.25)(5.875,0.25)

\psline{->}(5.125,0.25)(5.875,1.25)

\rput(7.25,1.25){$\cdots$}

\psframe[dimen=outer](8,2)(8.5,2.5)

\psline(8,1)(8.5,1)
\psline(8.5,1)(8.5,1.5)
\psline(8.5,1.5)(8,1.5)

\psline(8,0)(8.5,0)
\psline(8.5,0.5)(8,0.5)
\psline(8,0)(8,0.5)

\psframe[dimen=outer](9.5,2)(10,2.5)

\psline(9.5,1)(10,1)
\psline(10,1)(10,1.5)
\psline(10,1.5)(9.5,1.5)

\psline(9.5,0)(10,0)
\psline(10,0.5)(9.5,0.5)
\psline(9.5,0)(9.5,0.5)

\psline{->}(8.625,2.375)(9.375,2.375)
\psline{->}(8.625,2.25)(9.375,0.375)

\psline{->}(8.625,1.375)(9.375,2.25)
\psline{->}(8.625,1.25)(9.375,0.25)

\psline{->}(8.625,0.25)(9.375,1.25)
\end{pspicture}
\label{fig:trellis}
\end{figure}

A tiling will result from a path; starting in the first state ending in the first or second states; starting in the second state and ending in the third; or starting in the third state and ending in the first or second states.

Our strategy, is to raise the adjacency matrix to the required power (one less than the number of squares traversed) and total the appropriate entries in the resulting matrix.

In the following section we detail some of the calculations required to find the matrix $M$ to any given power.

\FloatBarrier
\section{Linear Algebra Calculations}

In this section we diagonalise the matrix $M$ by solving the characteristic polynomial and changing to a basis of eigenvectors. It is then an elementary matter to raise $M$ to any power.

First we compute the characteristic polynomial.
\begin{align}
| M - xI| &= \begin{vmatrix}1-x & 0 & 1 \\ 1 & -x & 1 \\ 0 & 1 & -x \end{vmatrix} \notag \\
              &= -1 \begin{vmatrix} 1-x & 1 \\ 1 & 1 \end{vmatrix} -x \begin{vmatrix} 1-x & 0 \\ 1 & -x \end{vmatrix} \notag \\
              &= x(-x^{2}+x+1) \notag
\end{align}

The matrix $M$ has $3$ distinct eigenvalues $0$, $\frac{1+\sqrt{5}}{2}$ and $\frac{1-\sqrt{5}}{2}$.

From these we can find the corresponding eigenvectors,

$$ \begin{pmatrix} 1 \\ 0 \\ -1 \end{pmatrix}, \begin{pmatrix} \frac{1+\sqrt{5}}{2} \\ \frac{1+\sqrt{5}}{2} \\ 1 \end{pmatrix} , 
 \begin{pmatrix} \frac{1-\sqrt{5}}{2} \\ \frac{1-\sqrt{5}}{2} \\ 1 \end{pmatrix}.$$

Letting,

$$C=
\begin{bmatrix} 
1 & \frac{1+\sqrt{5}}{2} & \frac{1-\sqrt{5}}{2} \\
0 &  \frac{1+\sqrt{5}}{2} & \frac{1-\sqrt{5}}{2} \\
-1 & 1 & 1 
\end{bmatrix},
$$
it is routine to check that, 

$$C^{-1}=
\begin{bmatrix}
1 & -1 & 0 \\
\frac{5-\sqrt{5}}{10} & \frac{3\sqrt{5} - 5}{10} & \frac{5-\sqrt{5}}{10} \\
\frac{5+\sqrt{5}}{10} & -\frac{5+3\sqrt{5}}{10} & \frac{5+\sqrt{5}}{10}
\end{bmatrix}.
$$

Further that $D=C^{-1}MC$ is diagonal is a routine consequence. Conjugating by $C$, the last equation becomes $M=CDC^{-1}$.

Now we can compute $M$ to any power easily, as:

$$M^{q} = CD^{q}C^{-1}.$$

In the following section we apply these calculations to prove our main result.

\FloatBarrier
\section{Main Results}

\begin{lem}
\label{lem:count}
The number of $1\times 1$ squares in an $n\times n$ square border is equal to $4(n-1)$.
\end{lem}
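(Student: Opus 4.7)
The plan is to give two short proofs, either of which suffices; I would present the cleaner one and mention the other as a sanity check. The main obstacle is essentially nonexistent, so the goal is to be brief and clear rather than elaborate.

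The first approach is by subtraction of areas. The $n\times n$ square border consists of the full $n\times n$ square with an $(n-2)\times(n-2)$ square removed from the interior. Since each region is composed of unit squares, the count of $1\times 1$ cells in the border is $n^{2} - (n-2)^{2}$, which expands to $4n - 4 = 4(n-1)$.

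The second approach, which matches the traversal convention set up in Section~2, is to count directly around the border. Each of the four sides of the outer square has length $n$, and contributes $n$ unit squares along that edge. Summing naively gives $4n$, but the four corner squares have each been counted in two adjacent sides, so we must subtract $4$, again giving $4(n-1)$. This second argument has the advantage of matching the ``walking around the border'' picture from Figure~\ref{fig:convention} and makes explicit why the transition matrix is applied $4(n-1)-1$ times in the subsequent enumeration.

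I would present the area-subtraction argument as the main proof for brevity, since it is a one-line computation, and then remark that the perimeter count agrees. There is no subtle step; the lemma is purely a set-up statement that fixes the length of the trellis graph used in the main theorem.
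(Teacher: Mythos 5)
Your main argument (subtracting the $(n-2)\times(n-2)$ inner square from the $n\times n$ outer square to get $n^{2}-(n-2)^{2}=4(n-1)$) is exactly the paper's proof, and it is correct. The alternative perimeter count ($4n$ minus the $4$ doubly counted corners) is a fine sanity check but adds nothing essential.
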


\begin{proof}
The number of squares is equal to the difference of the outer square and the inner square.
\begin{align}
 & n^{2}-(n-2)^{2} \notag \\
& = n^{2}-(n^{2}- 4n+4) \notag \\
& = 4(n-1) \notag
\end{align}
\end{proof}

\begin{thm}
The number of tilings of a square border of size $n\times n$, where $n\geq 2$, by tiles of size $1\times 1$, $2\times 1$ or $1\times 2$ is equal to 
$$\left( \frac{1+\sqrt{5}}{2} \right)^{4(n-1)}+  \ \left( \frac{1-\sqrt{5}}{2} \right) ^{4(n-1)}$$
\end{thm}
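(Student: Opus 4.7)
The plan is to leverage the transition matrix $M$ set up in Section~2 together with the square count from Lemma~\ref{lem:count}, and recognise the number of tilings as $\mathrm{tr}(M^{4(n-1)})$. Since the eigenvalues of $M$ computed in Section~3 are $0$, $\tfrac{1+\sqrt 5}{2}$ and $\tfrac{1-\sqrt 5}{2}$, this trace immediately evaluates to the claimed formula.

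First I would make the bijection between border tilings and cyclic state sequences precise. Given a tiling, traverse the border clockwise starting at the top-left cell and, for each of the $4(n-1)$ cells in turn, record a state in $\{1,2,3\}$: state~$1$ if the cell is covered by a $1\times 1$ tile, state~$3$ if it is the leading half of a $2\times 1$ domino (open to the right in the direction of travel), and state~$2$ if it is the trailing half. The key observation is that the cyclic traversal faithfully reflects adjacency in the border: any two border cells sharing an edge are cyclically consecutive in this ordering, and conversely. In particular, each domino occupies two cyclically consecutive cells, and the convention of Figure~\ref{fig:convention} makes the local compatibility of consecutive states exactly the relation encoded by $M$ --- including across the four corners, where the direction of travel changes but the bookkeeping does not.

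With this bijection in place, valid tilings correspond to closed walks of length $4(n-1)$ in the digraph whose adjacency matrix is $M$. By the standard identity, the number of such closed walks is $\mathrm{tr}(M^{4(n-1)})$; this matches the case analysis flagged in Section~2, since the allowed start-and-end pairs $(i,j)$ listed there are precisely those with $M_{ji}=1$, and $\sum_{i,j} M_{ji}\,(M^{4(n-1)-1})_{ij}=\mathrm{tr}(M^{4(n-1)})$. Since trace is invariant under conjugation, the diagonalisation $M=CDC^{-1}$ of Section~3 gives
\[
\mathrm{tr}(M^{4(n-1)})=\mathrm{tr}(D^{4(n-1)})=0^{4(n-1)}+\left(\tfrac{1+\sqrt 5}{2}\right)^{4(n-1)}+\left(\tfrac{1-\sqrt 5}{2}\right)^{4(n-1)},
\]
and for $n\geq 2$ the term $0^{4(n-1)}$ vanishes unambiguously, leaving the stated expression.

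The main obstacle is conceptual rather than computational: one must justify that collapsing the two-dimensional border into a one-dimensional cyclic path neither loses tilings nor introduces spurious ones. Concretely, one needs to confirm that no domino can straddle a corner in a way invisible to the linear picture, and that the two cells of any domino are faithfully encoded by the consecutive state pair $(3,2)$ regardless of which side of the border (or which orientation at a corner) they sit on. Once this bijection is argued carefully, the trace computation is immediate and the closed form falls out.
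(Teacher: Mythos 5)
Your proposal is correct, and it sits inside the same transfer-matrix framework as the paper (states, the matrix $M$, the eigenvalues $0,\ \frac{1+\sqrt5}{2},\ \frac{1-\sqrt5}{2}$, and the count $4(n-1)$ from Lemma \ref{lem:count}), but the final step is genuinely different and slicker. The paper computes $P=M^{4(n-1)-1}=CD^{4(n-1)-1}C^{-1}$ entrywise and sums the five entries $p_{11}+p_{12}+p_{23}+p_{31}+p_{32}$ corresponding to the admissible start/end states. You instead observe that these admissible pairs $(i,j)$ are exactly those with $M_{ji}=1$, so the paper's sum is $\sum_{i,j}(M^{4(n-1)-1})_{ij}M_{ji}=\mathrm{tr}\bigl(M^{4(n-1)}\bigr)$; equivalently, cyclic state sequences are closed walks in the digraph, counted by the trace. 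Then conjugation-invariance of the trace gives $\mathrm{tr}\bigl(M^{4(n-1)}\bigr)=\mathrm{tr}\bigl(D^{4(n-1)}\bigr)=\left(\frac{1+\sqrt5}{2}\right)^{4(n-1)}+\left(\frac{1-\sqrt5}{2}\right)^{4(n-1)}$ with no need for $C$, $C^{-1}$, or any entrywise computation --- indeed you only need the eigenvalues (trace of a power is the sum of the eigenvalue powers even without checking diagonalisability). What the paper's route buys is explicit intermediate data ($C$, $C^{-1}$, the entries of $P$) that it reuses in later sections; what your route buys is brevity, less room for algebraic slips, and a cleaner conceptual statement (tilings of the cyclic border $=$ closed walks of length $4(n-1)$). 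Your attention to the corner/wrap-around issue --- that adjacency of border cells coincides with cyclic consecutiveness in the traversal, so a domino straddling the ``seam'' is encoded by the allowed end-to-start transition --- is exactly the point the paper handles implicitly through its list of admissible start/end state pairs, so nothing is missing.
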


\begin{proof}
Firstly note that the number of state transitions is one less than the number of squares in the border. By lemma \ref{lem:count} means that we need to compute $M^{4(n-1)-1}$. Let $$M^{4(n-1)-1}=:P=(p_{ij}).$$ 

We will enumerate all paths starting in state $1$ and ending in state $1$ or $2$, those starting in state $2$ and ending in state $3$, or those starting in state $3$ ending in states $1$ or $2$. We will do so by finding the sum,
$$p_{11}+p_{12}+p_{23}+p_{31}+p_{32}.$$

Now we compute the matrix $P$. Recall that $M =CDC^{-1}$, hence: 

$$P = CD^{4(n-1)-1}C^{-1}.$$

The latter product equals,

$$ \begin{bmatrix} 1 & \frac{1+\sqrt{5}}{2} & \frac{1-\sqrt{5}}{2} \\0 &  \frac{1+\sqrt{5}}{2} & \frac{1-\sqrt{5}}{2} \\-1 & 1 & 1 \end{bmatrix}
\text{diag}\begin{pmatrix} 0, & \left(\frac{1+\sqrt{5}}{2}\right)^{4(n-1)-1}, & \left(\frac{1-\sqrt{5}}{2}\right)^{4(n-1)-1} \end{pmatrix}
\begin{bmatrix}1 & -1 & 0 \\ \frac{5-\sqrt{5}}{10} & \frac{3\sqrt{5} - 5}{10} & \frac{5-\sqrt{5}}{10} \\ \frac{5+\sqrt{5}}{10} & -\frac{5+3\sqrt{5}}{10} & \frac{5+\sqrt{5}}{10}
\end{bmatrix}$$

$$ = \begin{bmatrix} 0 &  \left(\frac{1+\sqrt{5}}{2}\right)^{4(n-1)} & \left(\frac{1-\sqrt{5}}{2}\right)^{4(n-1)} \\ 0 &  \left(\frac{1+\sqrt{5}}{2}\right)^{4(n-1)} & \left(\frac{1-\sqrt{5}}{2}\right)^{4(n-1)} \\ 0 &  \left(\frac{1+\sqrt{5}}{2}\right)^{4(n-1)-1} & \left(\frac{1-\sqrt{5}}{2}\right)^{4(n-1)-1} \end{bmatrix}
\begin{bmatrix}1 & -1 & 0 \\ \left(\frac{5-\sqrt{5}}{10}\right) & \left(\frac{3\sqrt{5} - 5}{10}\right) & \left(\frac{5-\sqrt{5}}{10}\right) \\ \left(\frac{5+\sqrt{5}}{10}\right) & -\left(\frac{5+3\sqrt{5}}{10}\right) & \left(\frac{5+\sqrt{5}}{10}\right)
\end{bmatrix}$$

We now compute the entries of $P$.
\begin{align}
p_{11} &= \left(\frac{1+\sqrt{5}}{2}\right)^{4(n-1)}\left(\frac{5-\sqrt{5}}{10}\right)+\left(\frac{1-\sqrt{5}}{2}\right)^{4(n-1)}\left(\frac{5+\sqrt{5}}{10}\right), \notag \\
p_{12} &= \left(\frac{1+\sqrt{5}}{2}\right)^{4(n-1)}\left(\frac{3\sqrt{5}-5}{10}\right)-\left(\frac{1-\sqrt{5}}{2}\right)^{4(n-1)}\left(\frac{5+3\sqrt{5}}{10}\right), \notag \\
p_{23} &= \left(\frac{1+\sqrt{5}}{2}\right)^{4(n-1)}\left(\frac{5-\sqrt{5}}{10}\right)+\left(\frac{1-\sqrt{5}}{2}\right)^{4(n-1)}\left(\frac{5+\sqrt{5}}{10}\right), \notag \\
p_{31} &= \left(\frac{1+\sqrt{5}}{2}\right)^{4(n-1)-1}\left(\frac{5-\sqrt{5}}{10}\right)+\left(\frac{1-\sqrt{5}}{2}\right)^{4(n-1)-1}\left(\frac{5+\sqrt{5}}{10}\right), \notag \\
p_{32} &= \left(\frac{1+\sqrt{5}}{2}\right)^{4(n-1)-1}\left(\frac{3\sqrt{5}-5}{10}\right)-\left(\frac{1-\sqrt{5}}{2}\right)^{4(n-1)-1}\left(\frac{5+3\sqrt{5}}{10}\right). \notag
\end{align}

Finally, finding the sum of the entries above proves the claim.
\end{proof}

\begin{rem}
Checking in the $2\times 2$ case gives,
$$\left( \frac{1+\sqrt{5}}{2} \right)^{4(2-1)}+  \ \left( \frac{1-\sqrt{5}}{2} \right) ^{4(2-1)}=7.$$
Checking in the $3\times 3$ case gives,
$$\left( \frac{1+\sqrt{5}}{2} \right)^{4(3-1)}+  \ \left( \frac{1-\sqrt{5}}{2} \right) ^{4(3-1)}=47.$$
Both of these cases can be found in the Appendix in Section \ref{sec:appendix}.
\end{rem}

\begin{thm}
The number of tilings of a rectangular border of size $m \times n$, where $m,n\geq 2$, by tiles of size $1\times 1$, $2\times 1$ or $1\times 2$ is equal to 
$$\left( \frac{1+\sqrt{5}}{2} \right)^{2(m+n-2)}+  \ \left( \frac{1-\sqrt{5}}{2} \right) ^{2(m+n-2)}$$
\end{thm}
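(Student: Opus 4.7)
The plan is to reuse the transition matrix framework of the previous theorem almost verbatim, since the state digraph and adjacency matrix $M$ encode only the local compatibility of consecutive cells along the linearized clockwise traversal of the border. This local structure is independent of the rectangle's aspect ratio; what changes from the square case is only the total length of the traversal. In particular, the same geometric observation that justified the square case still holds at each of the four corners of the rectangle: the two border cells meeting at a corner share an edge, so a $2\times 1$ tile may physically bridge them, which is exactly what the transitions in $M$ allow.

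First I would establish the rectangular analogue of Lemma \ref{lem:count}: the number of $1\times 1$ cells in an $m\times n$ rectangular border is
$$mn - (m-2)(n-2) = 2(m+n-2),$$
which specialises to $4(n-1)$ when $m=n$. Consequently the number of state transitions along the full traversal is $2(m+n-2)-1$. A valid tiling then corresponds to a path of length $2(m+n-2)-1$ in the trellis graph of Figure \ref{fig:trellis} whose start state and end state form one of the admissible cyclic pairs identified in the previous theorem, so writing $P = M^{2(m+n-2)-1} = (p_{ij})$ the tiling count is again $p_{11}+p_{12}+p_{23}+p_{31}+p_{32}$.

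From here the computation is identical to that of the square case, with every occurrence of the exponent $4(n-1)$ replaced by $2(m+n-2)$. Using the diagonalisation $M = CDC^{-1}$ already established, the entries of $P$ take the same closed form as before (with coefficients $\frac{5\pm\sqrt{5}}{10}$, $\frac{3\sqrt{5}-5}{10}$, and $-\frac{5+3\sqrt{5}}{10}$), and summing $p_{11}+p_{12}+p_{23}+p_{31}+p_{32}$ produces
$$\left(\frac{1+\sqrt{5}}{2}\right)^{2(m+n-2)} + \left(\frac{1-\sqrt{5}}{2}\right)^{2(m+n-2)}.$$
The only genuine thing to verify is that replacing the square by a rectangle does not break the correspondence between tilings and paths at the corners, but as noted above this follows from exactly the same adjacency argument as in the square case; the rest is a mechanical substitution. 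I do not anticipate any real obstacle beyond this bookkeeping, and as a sanity check the resulting formula agrees with the previous theorem when $m=n$.
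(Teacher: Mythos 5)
Your proposal matches the paper's own argument: the paper likewise counts the border cells as $mn-(m-2)(n-2)=2(m+n-2)$ and then repeats the square-case computation with $D$ raised to the power $2(m+n-2)-1$, summing the same entries of $P$. Your extra remark that the corner adjacencies behave exactly as in the square case is a sensible (and correct) piece of justification that the paper leaves implicit.
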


\begin{proof}
Firstly note that the number of squares in a rectangular border of size $m\times n$ is equal to 
\begin{align}
& mn-(m-2)(n-2) \notag \\
& = mn- (mn-2(m+n) + 4) \notag \\
& = 2(m+n-2). \notag
\end{align}
A similar computation with the matrix $D$ raised to the power $$2(m+n-2)-1$$ yields the result.
\end{proof}

\begin{rem}
It would be interesting to find similar formulae for the number of tilings of a square or rectangular border using other cuisenaire rods. A starting point might be to answer how many tilings of a $3\times 3$ square border are there using just $1\times 1$, $2\times 1$ and $3\times 1$ rods?
\end{rem}

\FloatBarrier
\section{Tilings up to symmetry}
 In general the orbits of this group action can be counted efficiently using Burnside's lemma, which states that the number of orbits is equal to the average number of fixed points,

$$|X/G| = \frac{1}{|G|} \sum_{g\in G} |X^{g}|.$$

Firstly we take into account only rotational symmetry in which case $G=C_{4}$, the cyclic group of order $4$.

\begin{thm}
\label{thm:rotational}
The number of tilings of an $n\times n$ square border, where $n\geq 2$ is an integer, by $1\times 1$ and $2\times 1$ cuisenaire rods, distinct up to rotational symmetry is given by the following formula

$$\frac{1}{4} \left[  x_{n}^{4} +(1+4(-1)^{n})x_{n}^{2}+2x_{n}+2(1+(-1)^{n}) \right].$$

Where $x_{n}=\phi^{n-1}+(-1)^{(n-1)}\phi^{-(n-1)}$ and $\phi = \frac{1+\sqrt{5}}{2}$.
\end{thm}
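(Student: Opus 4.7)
The plan is to apply Burnside's lemma to the action of $C_4=\{e,r,r^2,r^3\}$ on the set $X$ of border tilings, where $r$ is rotation by $90^\circ$. The proof of the main theorem already implicitly identifies $X$ with the set of tilings of a combinatorial cycle of length $N=4(n-1)$ by monominoes and dominoes: the count $\phi^{4(n-1)}+\bar\phi^{4(n-1)}$ is precisely $\operatorname{tr}(M^{N})=L_N$, the $N$-th Lucas number, where $L_k:=\phi^k+\bar\phi^k$. Under this identification, rotation by $90^\circ k$ becomes the cyclic shift of the cycle by $k(n-1)$ positions, so that in particular $r$ and $r^3=r^{-1}$ fix the same tilings.

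Next I would establish the standard bijection between tilings of a cycle of length $N$ invariant under a shift by $d$ (for $d\mid N$) and tilings of a cycle of length $d$: a shift-invariant tiling is determined by its fundamental domain, and every cyclic tiling of length $d$ lifts by repetition, consistency at the seam between consecutive copies being exactly the wrap-around condition of the shorter cycle. Applying this with $d=n-1,\,2(n-1),\,4(n-1)$ gives
\begin{align*}
|X^{e}|=L_{4(n-1)},\quad |X^{r^{2}}|=L_{2(n-1)},\quad |X^{r}|=|X^{r^{3}}|=L_{n-1}.
\end{align*}

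Since $\phi\bar\phi=-1$, one has $\bar\phi^{n-1}=(-1)^{n-1}\phi^{-(n-1)}$, so $x_n=L_{n-1}$, and the Lucas doubling identity $L_{2k}=L_k^{2}-2(-1)^k$ (obtained by squaring $\phi^k+\bar\phi^k$) yields $L_{2(n-1)}=x_n^{2}+2(-1)^n$ and, applying it again, $L_{4(n-1)}=(x_n^{2}+2(-1)^n)^{2}-2=x_n^{4}+4(-1)^n x_n^{2}+2$. Substituting into
$$|X/C_4|=\tfrac{1}{4}\bigl(|X^{e}|+2|X^{r}|+|X^{r^{2}}|\bigr)$$
and collecting terms reproduces the claimed formula.

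The main obstacle is justifying the reduction to tilings of the shorter cycles: one has to verify that a domino straddling the seam between two fundamental copies in the length-$N$ cycle descends to a genuine wrap-around domino in the length-$d$ cycle, and conversely that any cyclic tiling of length $d$ lifts without collision. The degenerate case $d=1$ (which occurs when $n=2$) must also be handled with the convention $L_1=1$, corresponding to the unique all-monomino tiling fixed by every rotation.
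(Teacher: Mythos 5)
Your proposal is correct and follows essentially the same route as the paper: Burnside's lemma over $C_{4}$, with the $90^{\circ}$ rotations fixing tilings counted by a quarter-length cyclic border, the $180^{\circ}$ rotation by a half-length one, giving $\tfrac{1}{4}\left[\phi^{4(n-1)}+\bar\phi^{4(n-1)}+2\left(\phi^{n-1}+\bar\phi^{n-1}\right)+\phi^{2(n-1)}+\bar\phi^{2(n-1)}\right]$ before substituting $x_{n}$. Your write-up merely makes explicit two steps the paper leaves terse --- the bijection between shift-invariant tilings of the length-$4(n-1)$ cycle and tilings of the shorter cycle (including the degenerate $n=2$ case), and the Lucas doubling identity used in the final simplification --- both of which check out.
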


\begin{proof}
We apply Burnside's lemma. The factor $\frac{1}{4}$ comes from the fact that the group has order $4$. The identity fixes all square borders. A $90$-degree rotation, clockwise or anticlockwise, fixes those square borders where we are given choice over a quarter of the squares, which must match up from start to finish so that the count is the same as that for a square border with a quarter of the total number of squares. Finally, a $180$-degree rotation fixes all square borders where half the squares match the other half. One computes,

\begin{align}
 & \frac{1}{4}  \left[ \left( \frac{1+\sqrt{5}}{2} \right)^{4(n-1)}+\left( \frac{1-\sqrt{5}}{2} \right)^{4(n-1)} \right. + \notag \\
& +2\left( \left( \frac{1+\sqrt{5}}{2} \right)^{(n-1)}+\left( \frac{1-\sqrt{5}}{2} \right)^{(n-1)} \right) + \notag \\
&\left. + \left( \frac{1+\sqrt{5}}{2} \right)^{2(n-1)}+\left( \frac{1-\sqrt{5}}{2} \right)^{2(n-1)} \right] \notag
\end{align}

Substitution for $x_{n}$ as given in the statement of the theorem then yields the result.
\end{proof}

\begin{rem}
Theorem \ref{thm:rotational} for even $n$ becomes,

$$\frac{1}{4} \left[  x_{n}^{4} +5x_{n}^{2}+2x_{n}+4 \right] $$

and for odd $n$ becomes,

$$\frac{1}{4} \left[  x_{n}^{4} -3x_{n}^{2}+2x_{n} \right] .$$
\end{rem}

\pagebreak

\begin{thm}
The number of tilings of an $m\times n$ rectangular border, where $m,n\geq 2$ are integers, and $m\neq n$, by $1\times 1$ and $2\times 1$ cuisenaire rods, distinct up to rotational symmetry is given by the following formula

$$\frac{1}{2} \left[  x_{n}^{2} +x_{n} + 2(-1)^{m+n} \right].$$

Where $x_{n}=\phi^{m+n-1}+(-1)^{(m+n-1)}\phi^{-(m+n-1)}$ and $\phi = \frac{1+\sqrt{5}}{2}$.
\end{thm}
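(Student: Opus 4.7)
The plan is to apply Burnside's lemma to the action of the rotational symmetry group on the set of tilings. Since $m\neq n$ the only non-trivial rotational symmetry of the $m\times n$ rectangle is the $180$-degree rotation, so $G=C_{2}$ and the orbit count reduces to $\frac{1}{2}(|X^{e}|+|X^{r}|)$. The identity contribution $|X^{e}|$ is given immediately by the preceding theorem for rectangular borders, namely $\phi^{2(m+n-2)}+\bar\phi^{2(m+n-2)}$, where $\bar\phi=(1-\sqrt{5})/2=-\phi^{-1}$.

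The substantive step is counting the $180$-rotation-fixed tilings. I would first verify that, under the clockwise unfolding of Section 2, the geometric $180$-rotation of the rectangle corresponds to the cyclic shift by $k:=m+n-2$ positions on the necklace of $2k$ border cells, by checking that the top-left corner is sent to the bottom-right corner, which lies exactly $k$ steps away along the border. A fixed tiling is then determined by its restriction to the first half $\{0,1,\dots,k-1\}$, subject to a compatibility condition at the two seams $\{k-1,k\}$ and $\{2k-1,0\}$. There are two mutually exclusive cases: either no domino crosses a seam, in which case the first half is an unconstrained tiling of a linear $1\times k$ strip (contributing $F_{k+1}$); or both seams are crossed by dominoes (the shift symmetry forces one iff the other), in which case only the interior positions $\{1,\dots,k-2\}$ remain free, a linear $1\times(k-2)$ strip contributing $F_{k-1}$. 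Their sum $F_{k+1}+F_{k-1}=L_{k}$ is a standard Lucas identity, and equals $\phi^{k}+\bar\phi^{k}$.

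Burnside then yields $\frac{1}{2}(L_{2k}+L_{k})$ with $k=m+n-2$. Invoking the doubling identity $L_{2k}=L_{k}^{2}-2(-1)^{k}$, and using $(-1)^{k}=(-1)^{m+n}$, rearranges this into the closed form stated in the theorem in terms of $x_{n}$. The main obstacle is the seam case analysis for $|X^{r}|$: one must observe that a seam domino at $\{k-1,k\}$ forces its shift-image $\{2k-1,0\}$ to also be a domino, so that the two seams are not independent degrees of freedom, and hence the contribution of that case is a single factor of $F_{k-1}$ and not $F_{k-1}^{2}$. Once this is clear the remainder is routine algebra with the Binet-type expression $\bar\phi^{k}=(-1)^{k}\phi^{-k}$.
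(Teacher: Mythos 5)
Your approach is the one the paper intends: Burnside's lemma for $G=C_{2}$, with the identity contribution taken from the preceding rectangular-border theorem. The paper's own proof is only the one-line remark that the computation is ``similar,'' so the substance you supply --- the identification of the $180$-degree rotation with the cyclic shift by $k=m+n-2$ positions on the $2k$ border cells, and the seam analysis showing that the fixed tilings number $F_{k+1}+F_{k-1}=L_{k}$, with the key observation that a domino on one seam forces a domino on the other --- is exactly the detail the paper omits, and that part is correct.

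The step that does not hold is your final sentence, which asserts without computation that $\tfrac12(L_{2k}+L_{k})$ ``rearranges into the closed form stated in the theorem.'' Carrying out the algebra with $L_{2k}=L_{k}^{2}-2(-1)^{k}$ gives $\tfrac12\left[L_{k}^{2}+L_{k}+2(-1)^{k+1}\right]$ with $k=m+n-2$; that is, the quantity playing the role of $x_{n}$ is $\phi^{m+n-2}+(-1)^{m+n-2}\phi^{-(m+n-2)}=L_{m+n-2}$ and the constant term is $2(-1)^{m+n-1}$, whereas the statement as printed takes $x_{n}=\phi^{m+n-1}+(-1)^{m+n-1}\phi^{-(m+n-1)}=L_{m+n-1}$ together with $+2(-1)^{m+n}$. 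These are not the same: for $m=3$, $n=4$ your (correct) Burnside count is $\tfrac12(L_{10}+L_{5})=\tfrac12(123+11)=67$, while the printed formula gives $\tfrac12(18^{2}+18-2)=170$. So your derivation is the right one, but you should do the last algebraic step explicitly and note that it matches the theorem only after correcting the exponent in the definition of $x_{n}$ to $m+n-2$ and the sign of the final term; as written, the claim that the algebra reproduces the printed formula is the one gap in the argument.
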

\begin{proof}
A similar computation using the rotational symmetry group of the rectangle which is $C_{2}$, the cyclic group of order $2$.
\end{proof}

Finally, taking into account reflections also, the symmetry group $G$ of the square acts on the set $X$ of all square border tilings. Now the group $G$ is the dihedral group of order $8$. 

It turns out that the cases of odd and even $n$ are different and we obtain two results.

\begin{thm}
The number of tilings of an $n\times n$ square border, where $n\geq 2$ is an even integer, by $1\times 1$ and $2\times 1$ cuisenaire rods, distinct up to rotations and reflections is given by the following formula

$$\frac{1}{8} \left[ (x_{n}^{2}+2)^{2} -2 + 2x_{n} +3(x_{n}^{2}+2) + \frac{6\sqrt{5}}{5}(\phi^{(n-1)} + \phi^{-(n-1)})x_{n} \right].$$

Where $x_{n}=\phi^{n-1}-\phi^{-(n-1)}$ and $\phi = \frac{1+\sqrt{5}}{2}$.
\end{thm}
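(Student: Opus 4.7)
The plan is to apply Burnside's lemma to the action of the dihedral group $G$ of order $8$ on the set $X$ of square-border tilings. The contributions from the identity and the three rotations are already provided by Theorem~\ref{thm:rotational} and its proof: the identity contributes $\phi^{4(n-1)}+\phi^{-4(n-1)}$, the two $90^{\circ}$ rotations each contribute $x_{n}=\phi^{n-1}-\phi^{-(n-1)}$, and the $180^{\circ}$ rotation contributes $\phi^{2(n-1)}+\phi^{-2(n-1)}=x_{n}^{2}+2$. The substance of the proof is therefore the fixed-point count for each of the four reflections.

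To analyse the reflections uniformly I would first reinterpret a border tiling as a tiling of a cyclic arrangement of $N:=4(n-1)$ cells by $1$-tiles and $2$-tiles, which is legitimate because a straight domino cannot wrap around a corner. The dihedral group $G$ then acts as the standard dihedral action on this cycle. For even $n$ the two edge reflections (axes through midpoints of opposite sides) act as involutions with no fixed cell but with two fixed edges, sitting in the middle of the two sides perpendicular to the axis; the two diagonal reflections (axes through opposite corners) act as involutions that fix the two corner cells on the axis and fix no edges.

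For an edge reflection I would enumerate fixed tilings by splitting on whether each of the two fixed edges is covered by a self-symmetric domino. Cutting at the fixed edges leaves a fundamental arc of $N/2=2(n-1)$ cells; after the chosen fixed-edge dominoes have used up $0$, $1$ or $2$ of its end cells, the rest is tiled freely as a path by $1$-tiles and $2$-tiles. Summing the four resulting Fibonacci counts and applying the identity $F_{k+1}+2F_{k}+F_{k-1}=F_{k+3}$ yields $F_{2n+1}$ fixed tilings per edge reflection. For a diagonal reflection the two fixed corner cells must each be a monomino, since any domino incident to a fixed cell would be mapped by the involution to a distinct overlapping domino; the remaining cells form two arcs of length $2n-3$ swapped by the reflection, so a fixed tiling is determined by a free path-tiling of one arc and contributes $F_{2n-2}$.

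Assembling the eight contributions, Burnside's lemma delivers
\[
\tfrac{1}{8}\bigl[\,(\phi^{4(n-1)}+\phi^{-4(n-1)})+2x_{n}+(x_{n}^{2}+2)+2F_{2n+1}+2F_{2n-2}\,\bigr],
\]
and the rest is algebraic. One checks $(x_{n}^{2}+2)^{2}-2=\phi^{4(n-1)}+\phi^{-4(n-1)}$, uses the identity $F_{2n+1}=L_{2n-2}+2F_{2n-2}$ (so that $2F_{2n+1}+2F_{2n-2}=2(x_{n}^{2}+2)+6F_{2n-2}$, which combined with the $180^{\circ}$ term gives the $3(x_{n}^{2}+2)$ coefficient), and finally rewrites $6F_{2n-2}$ as $\tfrac{6\sqrt{5}}{5}(\phi^{n-1}+\phi^{-(n-1)})x_{n}$ using $\phi^{n-1}+\phi^{-(n-1)}=\sqrt{5}\,F_{n-1}$ (valid because $n-1$ is odd) together with $F_{2n-2}=F_{n-1}L_{n-1}=F_{n-1}x_{n}$. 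The bulk of the work, and the principal obstacle, lies in the boundary case analysis for the edge reflections together with the Fibonacci/Lucas algebra needed to put the final sum into the precise form asserted in the theorem.
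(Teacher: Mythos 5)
Your proposal is correct, and since the paper omits the proof of this theorem it actually supplies the missing argument; it follows the same Burnside framework the paper uses for Theorem \ref{thm:rotational}, with the new content being the reflection fixed-point counts. I checked the key steps: viewing the border as a cycle of $4(n-1)$ cells is valid, the side-axis (edge-type) reflections fix $F_{2n-1}+2F_{2n-2}+F_{2n-3}=F_{2n+1}$ tilings, the diagonal reflections force monominoes on the two fixed corner cells and fix $F_{2n-2}$ tilings, and the Fibonacci--Lucas algebra ($F_{2n+1}=L_{2n-2}+2F_{2n-2}$, $L_{2n-2}=x_n^2+2$, $6F_{2n-2}=\frac{6\sqrt5}{5}(\phi^{n-1}+\phi^{-(n-1)})x_n$ for $n$ even) assembles exactly into the stated formula; the result also agrees with direct enumeration for $n=2$ (giving $3$) and with a numerical check at $n=4$ (giving $54$). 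One slip to fix: your justification ``because a straight domino cannot wrap around a corner'' is backwards --- dominoes \emph{do} cover consecutive cells across a corner turn (they are still straight in the plane); the correct justification for the cycle model is that two border cells are edge-adjacent in the plane precisely when they are consecutive in the cyclic order, so border tilings biject with monomer--dimer tilings of $C_{4(n-1)}$, which is the model you in fact use.
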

\begin{proof} Omitted.
\end{proof}

\begin{thm}
The number of tilings of an $n\times n$ square border, where $n\geq 3$ is an odd integer, by $1\times 1$ and $2\times 1$ cuisenaire rods, distinct up to rotations and reflections is given by the following formula
$$\frac{1}{8} \left[ (x_{n}^{2}-2)^{2} -2 + 2x_{n} +(x_{n}^{2}-2) + \frac{4\sqrt{5}}{5}(\phi^{(n-1)} - \phi^{-(n-1)})x_{n} \right].$$
Where $x_{n}=\phi^{n-1}+\phi^{-(n-1)}$ and $\phi = \frac{1+\sqrt{5}}{2}$.
\end{thm}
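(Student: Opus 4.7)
The plan is to apply Burnside's lemma to the dihedral group $G=D_{4}$ of order $8$ acting on the set $X$ of all border tilings, so that the desired count equals $\tfrac{1}{8}\sum_{g\in G}|X^{g}|$, summed over the identity, the three nontrivial rotations, the two reflections through axes joining midpoints of opposite sides, and the two reflections through the diagonals.

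I would first dispatch the rotational contributions by reusing the reasoning of Theorem \ref{thm:rotational}. Since $n$ is odd, $n-1$ is even and $\bigl(\tfrac{1-\sqrt{5}}{2}\bigr)^{k(n-1)}=\phi^{-k(n-1)}$ for every positive integer $k$. With $x_{n}=\phi^{n-1}+\phi^{-(n-1)}$, the identity contributes $\phi^{4(n-1)}+\phi^{-4(n-1)}=(x_{n}^{2}-2)^{2}-2$; each of the two $90^{\circ}$ rotations contributes $\phi^{n-1}+\phi^{-(n-1)}=x_{n}$, for a total of $2x_{n}$; and the $180^{\circ}$ rotation contributes $\phi^{2(n-1)}+\phi^{-2(n-1)}=x_{n}^{2}-2$. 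These four terms match the first four terms of the bracketed expression, so the four reflections must jointly supply the remaining term $\tfrac{4\sqrt{5}}{5}(\phi^{n-1}-\phi^{-(n-1)})x_{n}$.

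Next I would handle the reflections. Because $n$ is odd, each of the four reflection axes passes through the centres of exactly two border cells: the middle cells of the two sides crossed by an edge-midpoint axis, or two opposite corner cells for a diagonal axis. In every case a fixed tiling must cover each of these two axis cells by a $1\times 1$ rod, since no $2\times 1$ rod incident to such a cell is preserved by the reflection: at an edge-midpoint axis a domino covering the middle cell extends along the side either clockwise or anticlockwise, and the reflection swaps these two options; at a diagonal axis a domino at a corner extends along one of the two sides incident to the corner, and the reflection swaps them. With both axis cells forced to be $1\times 1$ rods, the remainder of the border splits into two disjoint open arcs of $2(n-1)-1=2n-3$ cells each that are exchanged by the reflection, so a fixed tiling is determined by an arbitrary tiling of a single strip of length $2n-3$. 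Enumerating this strip via the transition matrix $M$ as the sum of entries $M^{2n-4}[i,j]$ over $i\in\{1,3\}$ and $j\in\{1,2\}$, and using the diagonalisation $M=CDC^{-1}$, yields the Fibonacci number
\[
F_{2n-2}\;=\;\tfrac{1}{\sqrt{5}}\bigl(\phi^{2n-2}-\phi^{-(2n-2)}\bigr)\;=\;\tfrac{\sqrt{5}}{5}\bigl(\phi^{n-1}-\phi^{-(n-1)}\bigr)x_{n}
\]
per reflection, so the four reflections together contribute exactly the missing term.

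The main obstacle is the reflection case, in particular the geometric verification that each of the four axes meets the border in precisely two cell centres when $n$ is odd, and the case-by-case argument forcing those cells to be covered by $1\times 1$ rods; these are the very features that make the odd and even cases of the theorem look so different. Once these are in hand, summing the eight contributions and dividing by $8$ delivers the stated closed form.
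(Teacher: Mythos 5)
Your proposal is correct, and since the paper marks this proof as ``Omitted'' it effectively supplies the intended argument: Burnside's lemma over the dihedral group of order $8$, with the four rotation terms taken from Theorem \ref{thm:rotational} (using that $n-1$ is even, so $\left(\tfrac{1-\sqrt{5}}{2}\right)^{k(n-1)}=\phi^{-k(n-1)}$ and the identity, quarter-turn and half-turn contributions become $(x_{n}^{2}-2)^{2}-2$, $2x_{n}$ and $x_{n}^{2}-2$), and each of the four reflections fixing exactly $F_{2n-2}=\tfrac{1}{\sqrt{5}}\left(\phi^{2(n-1)}-\phi^{-2(n-1)}\right)$ tilings, because for odd $n$ each axis passes through exactly two border cells, those cells are forced to be $1\times 1$ rods (the reflection swaps the two dominoes that could cover them), and the two remaining arcs of $2n-3$ cells are exchanged, leaving a free strip counted by a Fibonacci number. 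The numbers check out (for $n=3$: $47+2\cdot 3+7+4\cdot 3=72$, giving $9$ orbits), so summing the eight fixed-point counts and dividing by $8$ gives precisely the stated closed form; the only cosmetic remark is that the transition-matrix computation of the strip count is unnecessary overhead, since the $1\times(2n-3)$ strip count is the standard Fibonacci fact already cited in the introduction.
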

\begin{proof} Omitted.
\end{proof}

\pagebreak

\FloatBarrier
\section{Appendix}
\label{sec:appendix}
This appendix features the full list of tilings in the $2\times 2$ and $3 \times 3$ cases. Note that tilings are not considered equivalent up to symmetry.
All possible tilings of a $2\times 2$ square with $1\times 1$ and $2 \times 1$ cuisenaire rods are illustrated in figure \ref{fig:tiling1}. Those tilings of a $3 \times 3$ square border are illustrated in figure \ref{fig:tiling2}.

\begin{figure}[h]
\centering
\caption{The $7$ tilings of a $2\times 2$ square}
\begin{pspicture}(0,0)(10,1) 
\psframe[dimen=outer](0,0)(1,1)
\psframe[dimen=outer](0,0)(0.5,0.5)
\psframe[dimen=outer](0,0.5)(0.5,1)
\psframe[dimen=outer](0.5,0.5)(1,1)
\psframe[dimen=outer](0.5,0)(1,0.5)

\psframe[dimen=outer](1.5,0)(2.5,1)
\psframe[dimen=outer,fillstyle=solid,fillcolor=red](1.5,0.5)(2.5,1)
\psframe[dimen=outer](1.5,0)(2,0.5)
\psframe[dimen=outer](2,0)(2.5,0.5)

\psframe[dimen=outer](3,0)(4,1)
\psframe[dimen=outer](3,0)(3.5,0.5)
\psframe[dimen=outer](3,0.5)(3.5,1)
\psframe[dimen=outer,fillstyle=solid,fillcolor=red](3.5,0)(4,1)

\psframe[dimen=outer](4.5,0)(5.5,1)
\psframe[dimen=outer,fillstyle=solid,fillcolor=red](4.5,0)(5.5,0.5)
\psframe[dimen=outer](4.5,0.5)(5,1)
\psframe[dimen=outer](5,0.5)(5.5,1)

\psframe[dimen=outer](6,0)(7,1)
\psframe[dimen=outer](6.5,0)(7,0.5)
\psframe[dimen=outer](6.5,0.5)(7,1)
\psframe[dimen=outer,fillstyle=solid,fillcolor=red](6,0)(6.5,1)

\psframe[dimen=outer](7.5,0)(8.5,1)
\psframe[dimen=outer,fillstyle=solid,fillcolor=red](7.5,0)(8.5,0.5)
\psframe[dimen=outer,fillstyle=solid,fillcolor=red](7.5,0.5)(8.5,1)

\psframe[dimen=outer](9,0)(10,1)
\psframe[dimen=outer,fillstyle=solid,fillcolor=red](9,0)(9.5,1)
\psframe[dimen=outer,fillstyle=solid,fillcolor=red](9.5,0)(10,1)
\end{pspicture}
\label{fig:tiling1}
\end{figure}
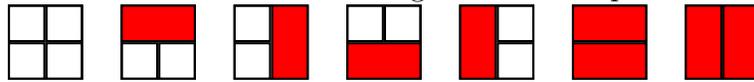

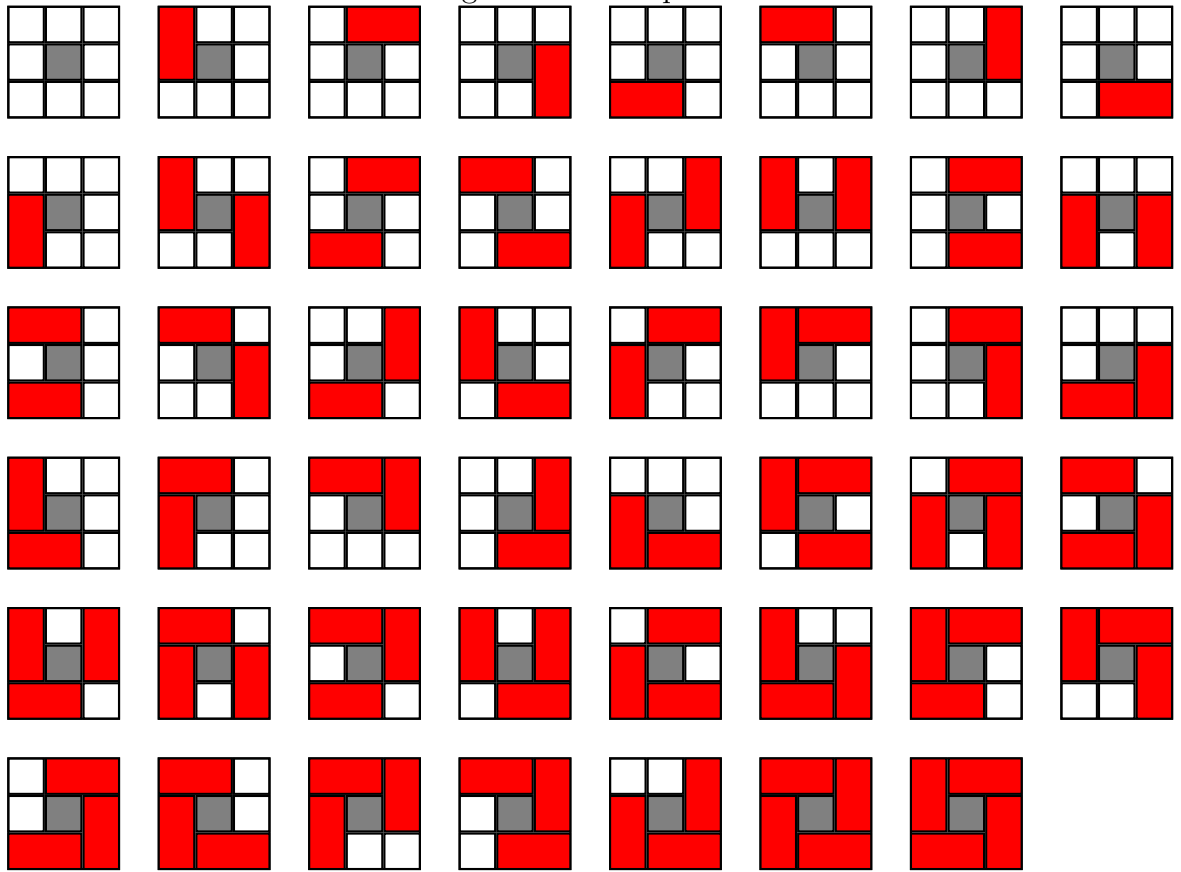
\begin{figure}[h]
\centering
\caption{The $47$ tilings of a $3\times 3$ square border}
\begin{pspicture}(0,0)(15.5,11.5) 
\psframe[dimen=outer](0,10)(1.5,11.5)
\psframe[dimen=outer,fillstyle=solid,fillcolor=gray](0.5,10.5)(1,11)
\psframe[dimen=outer](0,11)(0.5,11.5)
\psframe[dimen=outer](0,10.5)(0.5,11)
\psframe[dimen=outer](0,10)(0.5,10.5)

\psframe[dimen=outer](0.5,11)(1,11.5)
\psframe[dimen=outer](0.5,10)(1,10.5)

\psframe[dimen=outer](1,11)(1.5,11.5)
\psframe[dimen=outer](1,10.5)(1.5,11)
\psframe[dimen=outer](1,10)(1.5,10.5)
\psframe[dimen=outer](2,10)(3.5,11.5)
\psframe[dimen=outer,fillstyle=solid,fillcolor=gray](2.5,10.5)(3,11)
\psframe[dimen=outer](3,10)(3.5,10.5)
\psframe[dimen=outer](3,10.5)(3.5,11)
\psframe[dimen=outer](3,11)(3.5,11.5)
\psframe[dimen=outer](2.5,11)(3,11.5)
\psframe[dimen=outer](2.5,10)(3,10.5)
\psframe[dimen=outer](2,10)(2.5,10.5)
\psframe[dimen=outer,fillstyle=solid,fillcolor=red](2,10.5)(2.5,11.5)

\psframe[dimen=outer](4,10)(5.5,11.5)
\psframe[dimen=outer,fillstyle=solid,fillcolor=gray](4.5,10.5)(5,11)
\psframe[dimen=outer](4,10)(4.5,10.5)
\psframe[dimen=outer](4,10.5)(4.5,11)
\psframe[dimen=outer](4,11)(4.5,11.5)
\psframe[dimen=outer](5,10)(5.5,10.5)
\psframe[dimen=outer](4.5,10)(5,10.5)
\psframe[dimen=outer](5,10.5)(5.5,11)
\psframe[dimen=outer,fillstyle=solid,fillcolor=red](4.5,11)(5.5,11.5)

\psframe[dimen=outer](6,10)(7.5,11.5)
\psframe[dimen=outer,fillstyle=solid,fillcolor=gray](6.5,10.5)(7,11)
\psframe[dimen=outer](6,10)(6.5,10.5)
\psframe[dimen=outer](6,10.5)(6.5,11)
\psframe[dimen=outer](6,11)(6.5,11.5)
\psframe[dimen=outer](6.5,11)(7,11.5)
\psframe[dimen=outer](7,11)(7.5,11.5)
\psframe[dimen=outer](6.5,10)(7,10.5)
\psframe[dimen=outer,fillstyle=solid,fillcolor=red](7,10)(7.5,11)

\psframe[dimen=outer](8,10)(9.5,11.5)
\psframe[dimen=outer,fillstyle=solid,fillcolor=gray](8.5,10.5)(9,11)
\psframe[dimen=outer](8,11)(8.5,11.5)
\psframe[dimen=outer](8.5,11)(9,11.5)
\psframe[dimen=outer](9,11)(9.5,11.5)
\psframe[dimen=outer](8,10.5)(8.5,11)
\psframe[dimen=outer](9,10.5)(9.5,11)
\psframe[dimen=outer](9,10)(9.5,10.5)
\psframe[dimen=outer,fillstyle=solid,fillcolor=red](8,10)(9,10.5)

\psframe[dimen=outer](10,10)(11.5,11.5)
\psframe[dimen=outer,fillstyle=solid,fillcolor=gray](10.5,10.5)(11,11)
\psframe[dimen=outer](10,10.5)(10.5,11)
\psframe[dimen=outer](10,10)(10.5,10.5)
\psframe[dimen=outer](10.5,10)(11,10.5)
\psframe[dimen=outer](11,11)(11.5,11.5)
\psframe[dimen=outer](11,10.5)(11.5,11)
\psframe[dimen=outer](11,10)(11.5,10.5)
\psframe[dimen=outer,fillstyle=solid,fillcolor=red](10,11)(11,11.5)

\psframe[dimen=outer](12,10)(13.5,11.5)
\psframe[dimen=outer,fillstyle=solid,fillcolor=gray](12.5,10.5)(13,11)
\psframe[dimen=outer](12,10)(12.5,10.5)
\psframe[dimen=outer](12,10.5)(12.5,11)
\psframe[dimen=outer](12,11)(12.5,11.5)
\psframe[dimen=outer](12.5,11)(13,11.5)
\psframe[dimen=outer](12.5,10)(13,10.5)
\psframe[dimen=outer](13,10)(13.5,10.5)
\psframe[dimen=outer,fillstyle=solid,fillcolor=red](13,10.5)(13.5,11.5)

\psframe[dimen=outer](14,10)(15.5,11.5)
\psframe[dimen=outer,fillstyle=solid,fillcolor=gray](14.5,10.5)(15,11)
\psframe[dimen=outer](14,10)(14.5,10.5)
\psframe[dimen=outer](14,10.5)(14.5,11)
\psframe[dimen=outer](14,11)(14.5,11.5)
\psframe[dimen=outer](14.5,11)(15,11.5)
\psframe[dimen=outer](15,11)(15.5,11.5)
\psframe[dimen=outer](15,10.5)(15.5,11)
\psframe[dimen=outer,fillstyle=solid,fillcolor=red](14.5,10)(15.5,10.5)


\psframe[dimen=outer](0,8)(1.5,9.5)
\psframe[dimen=outer,fillstyle=solid,fillcolor=gray](0.5,8.5)(1,9)
\psframe[dimen=outer](0,9)(0.5,9.5)
\psframe[dimen=outer](0.5,9)(1,9.5)
\psframe[dimen=outer](0.5,8)(1,8.5)
\psframe[dimen=outer](1,9)(1.5,9.5)
\psframe[dimen=outer](1,8.5)(1.5,9)
\psframe[dimen=outer](1,8)(1.5,8.5)
\psframe[dimen=outer,fillstyle=solid,fillcolor=red](0,8)(0.5,9)

\psframe[dimen=outer](2,8)(3.5,9.5)
\psframe[dimen=outer,fillstyle=solid,fillcolor=gray](2.5,8.5)(3,9)
\psframe[dimen=outer](2,8)(2.5,8.5)
\psframe[dimen=outer](2.5,9)(3,9.5)
\psframe[dimen=outer](2.5,8)(3,8.5)
\psframe[dimen=outer](3,9)(3.5,9.5)
\psframe[dimen=outer,fillstyle=solid,fillcolor=red](2,8.5)(2.5,9.5)
\psframe[dimen=outer,fillstyle=solid,fillcolor=red](3,8)(3.5,9)

\psframe[dimen=outer](4,8)(5.5,9.5)
\psframe[dimen=outer,fillstyle=solid,fillcolor=gray](4.5,8.5)(5,9)
\psframe[dimen=outer](4,9)(4.5,9.5)
\psframe[dimen=outer](4,8.5)(4.5,9)
\psframe[dimen=outer](5,8.5)(5.5,9)
\psframe[dimen=outer](5,8)(5.5,8.5)
\psframe[dimen=outer,fillstyle=solid,fillcolor=red](4,8)(5,8.5)
\psframe[dimen=outer,fillstyle=solid,fillcolor=red](4.5,9)(5.5,9.5)

\psframe[dimen=outer](6,8)(7.5,9.5)
\psframe[dimen=outer,fillstyle=solid,fillcolor=gray](6.5,8.5)(7,9)
\psframe[dimen=outer](7,9)(7.5,9.5)
\psframe[dimen=outer](7,8.5)(7.5,9)
\psframe[dimen=outer](6,8.5)(6.5,9)
\psframe[dimen=outer](6,8)(6.5,8.5)
\psframe[dimen=outer,fillstyle=solid,fillcolor=red](6.5,8)(7.5,8.5)
\psframe[dimen=outer,fillstyle=solid,fillcolor=red](6,9)(7,9.5)
\psframe[dimen=outer](8,8)(9.5,9.5)
\psframe[dimen=outer,fillstyle=solid,fillcolor=gray](8.5,8.5)(9,9)
\psframe[dimen=outer](8,9)(8.5,9.5)
\psframe[dimen=outer](8.5,9)(9,9.5)
\psframe[dimen=outer](8.5,8)(9,8.5)
\psframe[dimen=outer](9,8)(9.5,8.5)
\psframe[dimen=outer,fillstyle=solid,fillcolor=red](8,8)(8.5,9)
\psframe[dimen=outer,fillstyle=solid,fillcolor=red](9,8.5)(9.5,9.5)

\psframe[dimen=outer](10,8)(11.5,9.5)
\psframe[dimen=outer,fillstyle=solid,fillcolor=gray](10.5,8.5)(11,9)
\psframe[dimen=outer](10,8)(10.5,8.5)
\psframe[dimen=outer](10.5,9)(11,9.5)
\psframe[dimen=outer](10.5,8)(11,8.5)
\psframe[dimen=outer](11,8)(11.5,8.5)
\psframe[dimen=outer,fillstyle=solid,fillcolor=red](10,8.5)(10.5,9.5)
\psframe[dimen=outer,fillstyle=solid,fillcolor=red](11,8.5)(11.5,9.5)

\psframe[dimen=outer](12,8)(13.5,9.5)
\psframe[dimen=outer,fillstyle=solid,fillcolor=gray](12.5,8.5)(13,9)
\psframe[dimen=outer](12,9)(12.5,9.5)
\psframe[dimen=outer](12,8.5)(12.5,9)
\psframe[dimen=outer](12,8)(12.5,8.5)
\psframe[dimen=outer](13,8.5)(13.5,9)
\psframe[dimen=outer,fillstyle=solid,fillcolor=red](12.5,8)(13.5,8.5)
\psframe[dimen=outer,fillstyle=solid,fillcolor=red](12.5,9)(13.5,9.5)

\psframe[dimen=outer](14,8)(15.5,9.5)
\psframe[dimen=outer,fillstyle=solid,fillcolor=gray](14.5,8.5)(15,9)
\psframe[dimen=outer](14,9)(14.5,9.5)
\psframe[dimen=outer](14.5,9)(15,9.5)
\psframe[dimen=outer](14.5,8)(15,8.5)
\psframe[dimen=outer](15,9)(15.5,9.5)
\psframe[dimen=outer,fillstyle=solid,fillcolor=red](14,8)(14.5,9)
\psframe[dimen=outer,fillstyle=solid,fillcolor=red](15,8)(15.5,9)


\psframe[dimen=outer](0,6)(1.5,7.5)
\psframe[dimen=outer,fillstyle=solid,fillcolor=gray](0.5,6.5)(1,7)
\psframe[dimen=outer](0,6.5)(0.5,7)
\psframe[dimen=outer](1,7)(1.5,7.5)
\psframe[dimen=outer](1,6.5)(1.5,7)
\psframe[dimen=outer](1,6)(1.5,6.5)
\psframe[dimen=outer,fillstyle=solid,fillcolor=red](0,6)(1,6.5)
\psframe[dimen=outer,fillstyle=solid,fillcolor=red](0,7)(1,7.5)
\psframe[dimen=outer](2,6)(3.5,7.5)
\psframe[dimen=outer,fillstyle=solid,fillcolor=gray](2.5,6.5)(3,7)
\psframe[dimen=outer](2,6.5)(2.5,7)
\psframe[dimen=outer](2,6)(2.5,6.5)
\psframe[dimen=outer](2.5,6)(3,6.5)
\psframe[dimen=outer](3,7)(3.5,7.5)
\psframe[dimen=outer,fillstyle=solid,fillcolor=red](2,7)(3,7.5)
\psframe[dimen=outer,fillstyle=solid,fillcolor=red](3,6)(3.5,7)
\psframe[dimen=outer](4,6)(5.5,7.5)
\psframe[dimen=outer,fillstyle=solid,fillcolor=gray](4.5,6.5)(5,7)
\psframe[dimen=outer](4,7)(4.5,7.5)
\psframe[dimen=outer](4,6.5)(4.5,7)
\psframe[dimen=outer](4.5,7)(5,7.5)
\psframe[dimen=outer](5,6)(5.5,6.5)
\psframe[dimen=outer,fillstyle=solid,fillcolor=red](4,6)(5,6.5)
\psframe[dimen=outer,fillstyle=solid,fillcolor=red](5,6.5)(5.5,7.5)

\psframe[dimen=outer](6,6)(7.5,7.5)
\psframe[dimen=outer,fillstyle=solid,fillcolor=gray](6.5,6.5)(7,7)
\psframe[dimen=outer](6,6)(6.5,6.5)
\psframe[dimen=outer](6.5,7)(7,7.5)
\psframe[dimen=outer](7,7)(7.5,7.5)
\psframe[dimen=outer](7,6.5)(7.5,7)
\psframe[dimen=outer,fillstyle=solid,fillcolor=red](6.5,6)(7.5,6.5)
\psframe[dimen=outer,fillstyle=solid,fillcolor=red](6,6.5)(6.5,7.5)

\psframe[dimen=outer](8,6)(9.5,7.5)
\psframe[dimen=outer,fillstyle=solid,fillcolor=gray](8.5,6.5)(9,7)
\psframe[dimen=outer](8,7)(8.5,7.5)
\psframe[dimen=outer](8.5,6)(9,6.5)
\psframe[dimen=outer](9,6.5)(9.5,7)
\psframe[dimen=outer](9,6)(9.5,6.5)
\psframe[dimen=outer,fillstyle=solid,fillcolor=red](8.5,7)(9.5,7.5)
\psframe[dimen=outer,fillstyle=solid,fillcolor=red](8,6)(8.5,7)

\psframe[dimen=outer](10,6)(11.5,7.5)
\psframe[dimen=outer,fillstyle=solid,fillcolor=gray](10.5,6.5)(11,7)
\psframe[dimen=outer](10,6)(10.5,6.5)
\psframe[dimen=outer](10.5,6)(11,6.5)
\psframe[dimen=outer](11,6.5)(11.5,7)
\psframe[dimen=outer](11,6)(11.5,6.5)
\psframe[dimen=outer,fillstyle=solid,fillcolor=red](10.5,7)(11.5,7.5)
\psframe[dimen=outer,fillstyle=solid,fillcolor=red](10,6.5)(10.5,7.5)

\psframe[dimen=outer](12,6)(13.5,7.5)
\psframe[dimen=outer,fillstyle=solid,fillcolor=gray](12.5,6.5)(13,7)
\psframe[dimen=outer](12,7)(12.5,7.5)
\psframe[dimen=outer](12,6.5)(12.5,7)
\psframe[dimen=outer](12,6)(12.5,6.5)
\psframe[dimen=outer](12.5,6)(13,6.5)
\psframe[dimen=outer,fillstyle=solid,fillcolor=red](12.5,7)(13.5,7.5)
\psframe[dimen=outer,fillstyle=solid,fillcolor=red](13,6)(13.5,7)

\psframe[dimen=outer](14,6)(15.5,7.5)
\psframe[dimen=outer,fillstyle=solid,fillcolor=gray](14.5,6.5)(15,7)
\psframe[dimen=outer](14,7)(14.5,7.5)
\psframe[dimen=outer](14,6.5)(14.5,7)
\psframe[dimen=outer](14.5,7)(15,7.5)
\psframe[dimen=outer](15,7)(15.5,7.5)
\psframe[dimen=outer,fillstyle=solid,fillcolor=red](14,6)(15,6.5)
\psframe[dimen=outer,fillstyle=solid,fillcolor=red](15,6)(15.5,7)

\psframe[dimen=outer](0,4)(1.5,5.5)
\psframe[dimen=outer,fillstyle=solid,fillcolor=gray](0.5,4.5)(1,5)
\psframe[dimen=outer](1,5)(1.5,5.5)
\psframe[dimen=outer](1,4.5)(1.5,5)
\psframe[dimen=outer](1,4)(1.5,4.5)
\psframe[dimen=outer](0.5,5)(1,5.5)
\psframe[dimen=outer,fillstyle=solid,fillcolor=red](0,4)(1,4.5)
\psframe[dimen=outer,fillstyle=solid,fillcolor=red](0,4.5)(0.5,5.5)

\psframe[dimen=outer](2,4)(3.5,5.5)
\psframe[dimen=outer,fillstyle=solid,fillcolor=gray](2.5,4.5)(3,5)
\psframe[dimen=outer](3,5)(3.5,5.5)
\psframe[dimen=outer](3,4.5)(3.5,5)
\psframe[dimen=outer](3,4)(3.5,4.5)
\psframe[dimen=outer](2.5,4)(3,4.5)
\psframe[dimen=outer,fillstyle=solid,fillcolor=red](2,5)(3,5.5)
\psframe[dimen=outer,fillstyle=solid,fillcolor=red](2,4)(2.5,5)

\psframe[dimen=outer](4,4)(5.5,5.5)
\psframe[dimen=outer,fillstyle=solid,fillcolor=gray](4.5,4.5)(5,5)
\psframe[dimen=outer](4,4.5)(4.5,5)
\psframe[dimen=outer](4,4)(4.5,4.5)
\psframe[dimen=outer](4.5,4)(5,4.5)
\psframe[dimen=outer](5,4)(5.5,4.5)
\psframe[dimen=outer,fillstyle=solid,fillcolor=red](4,5)(5,5.5)
\psframe[dimen=outer,fillstyle=solid,fillcolor=red](5,4.5)(5.5,5.5)

\psframe[dimen=outer](6,4)(7.5,5.5)
\psframe[dimen=outer,fillstyle=solid,fillcolor=gray](6.5,4.5)(7,5)
\psframe[dimen=outer](6,5)(6.5,5.5)
\psframe[dimen=outer](6,4.5)(6.5,5)
\psframe[dimen=outer](6,4)(6.5,4.5)
\psframe[dimen=outer](6.5,5)(7,5.5)
\psframe[dimen=outer,fillstyle=solid,fillcolor=red](6.5,4)(7.5,4.5)
\psframe[dimen=outer,fillstyle=solid,fillcolor=red](7,4.5)(7.5,5.5)

\psframe[dimen=outer](8,4)(9.5,5.5)
\psframe[dimen=outer,fillstyle=solid,fillcolor=gray](8.5,4.5)(9,5)
\psframe[dimen=outer](8,5)(8.5,5.5)
\psframe[dimen=outer](8.5,5)(9,5.5)
\psframe[dimen=outer](9,5)(9.5,5.5)
\psframe[dimen=outer](9,4.5)(9.5,5)
\psframe[dimen=outer,fillstyle=solid,fillcolor=red](8.5,4)(9.5,4.5)
\psframe[dimen=outer,fillstyle=solid,fillcolor=red](8,4)(8.5,5)

\psframe[dimen=outer](10,4)(11.5,5.5)
\psframe[dimen=outer,fillstyle=solid,fillcolor=gray](10.5,4.5)(11,5)
\psframe[dimen=outer](10,4)(10.5,4.5)
\psframe[dimen=outer](11,4.5)(11.5,5)
\psframe[dimen=outer,fillstyle=solid,fillcolor=red](10.5,4)(11.5,4.5)
\psframe[dimen=outer,fillstyle=solid,fillcolor=red](10.5,5)(11.5,5.5)
\psframe[dimen=outer,fillstyle=solid,fillcolor=red](10,4.5)(10.5,5.5)

\psframe[dimen=outer](12,4)(13.5,5.5)
\psframe[dimen=outer,fillstyle=solid,fillcolor=gray](12.5,4.5)(13,5)
\psframe[dimen=outer](12,5)(12.5,5.5)
\psframe[dimen=outer](12.5,4)(13,4.5)
\psframe[dimen=outer,fillstyle=solid,fillcolor=red](12,4)(12.5,5)
\psframe[dimen=outer,fillstyle=solid,fillcolor=red](13,4)(13.5,5)
\psframe[dimen=outer,fillstyle=solid,fillcolor=red](12.5,5)(13.5,5.5)

\psframe[dimen=outer](14,4)(15.5,5.5)
\psframe[dimen=outer,fillstyle=solid,fillcolor=gray](14.5,4.5)(15,5)
\psframe[dimen=outer](14,4.5)(14.5,5)
\psframe[dimen=outer](15,5)(15.5,5.5)
\psframe[dimen=outer,fillstyle=solid,fillcolor=red](14,4)(15,4.5)
\psframe[dimen=outer,fillstyle=solid,fillcolor=red](14,5)(15,5.5)
\psframe[dimen=outer,fillstyle=solid,fillcolor=red](15,4)(15.5,5)

\psframe[dimen=outer](0,2)(1.5,3.5)
\psframe[dimen=outer,fillstyle=solid,fillcolor=gray](0.5,2.5)(1,3)
\psframe[dimen=outer](0.5,3)(1,3.5)
\psframe[dimen=outer](1,2)(1.5,2.5)
\psframe[dimen=outer,fillstyle=solid,fillcolor=red](0,2.5)(0.5,3.5)
\psframe[dimen=outer,fillstyle=solid,fillcolor=red](1,2.5)(1.5,3.5)
\psframe[dimen=outer,fillstyle=solid,fillcolor=red](0,2)(1,2.5)

\psframe[dimen=outer](2,2)(3.5,3.5)
\psframe[dimen=outer,fillstyle=solid,fillcolor=gray](2.5,2.5)(3,3)
\psframe[dimen=outer](3,3)(3.5,3.5)
\psframe[dimen=outer](2.5,2)(3,2.5)
\psframe[dimen=outer,fillstyle=solid,fillcolor=red](2,2)(2.5,3)
\psframe[dimen=outer,fillstyle=solid,fillcolor=red](3,2)(3.5,3)
\psframe[dimen=outer,fillstyle=solid,fillcolor=red](2,3)(3,3.5)

%
%

\psframe[dimen=outer](4,2)(5.5,3.5)
\psframe[dimen=outer,fillstyle=solid,fillcolor=gray](4.5,2.5)(5,3)
\psframe[dimen=outer,fillstyle=solid,fillcolor=red](4,3)(5,3.5)
\psframe[dimen=outer,fillstyle=solid,fillcolor=red](4,2)(5,2.5)
\psframe[dimen=outer,fillstyle=solid,fillcolor=red](5,2.5)(5.5,3.5)
\psframe[dimen=outer](4,2.5)(4.5,3)
\psframe[dimen=outer](5,2)(5.5,2.5)

\psframe[dimen=outer](6,2)(7.5,3.5)
\psframe[dimen=outer,fillstyle=solid,fillcolor=gray](6.5,2.5)(7,3)
\psframe[dimen=outer](6.5,3)(7,3.5)
\psframe[dimen=outer](6,2)(6.5,2.5)
\psframe[dimen=outer,fillstyle=solid,fillcolor=red](6,2.5)(6.5,3.5)
\psframe[dimen=outer,fillstyle=solid,fillcolor=red](7,2.5)(7.5,3.5)
\psframe[dimen=outer,fillstyle=solid,fillcolor=red](6.5,2)(7.5,2.5)

\psframe[dimen=outer](8,2)(9.5,3.5)
\psframe[dimen=outer,fillstyle=solid,fillcolor=gray](8.5,2.5)(9,3)
\psframe[dimen=outer](8,3)(8.5,3.5)
\psframe[dimen=outer](9,2.5)(9.5,3)
\psframe[dimen=outer,fillstyle=solid,fillcolor=red](8.5,2)(9.5,2.5)
\psframe[dimen=outer,fillstyle=solid,fillcolor=red](8.5,3)(9.5,3.5)
\psframe[dimen=outer,fillstyle=solid,fillcolor=red](8,2)(8.5,3)
\psframe[dimen=outer](10,2)(11.5,3.5)
\psframe[dimen=outer,fillstyle=solid,fillcolor=gray](10.5,2.5)(11,3)
\psframe[dimen=outer](10.5,3)(11,3.5)
\psframe[dimen=outer](11,3)(11.5,3.5)
\psframe[dimen=outer,fillstyle=solid,fillcolor=red](11,2)(11.5,3)
\psframe[dimen=outer,fillstyle=solid,fillcolor=red](10,2)(11,2.5)
\psframe[dimen=outer,fillstyle=solid,fillcolor=red](10,2.5)(10.5,3.5)

\psframe[dimen=outer](12,2)(13.5,3.5)
\psframe[dimen=outer,fillstyle=solid,fillcolor=gray](12.5,2.5)(13,3)
\psframe[dimen=outer](13,2.5)(13.5,3)
\psframe[dimen=outer](13,2)(13.5,2.5)
\psframe[dimen=outer,fillstyle=solid,fillcolor=red](12.5,3)(13.5,3.5)
\psframe[dimen=outer,fillstyle=solid,fillcolor=red](12,2.5)(12.5,3.5)
\psframe[dimen=outer,fillstyle=solid,fillcolor=red](12,2)(13,2.5)

\psframe[dimen=outer](14,2)(15.5,3.5)
\psframe[dimen=outer,fillstyle=solid,fillcolor=gray](14.5,2.5)(15,3)
\psframe[dimen=outer,fillstyle=solid,fillcolor=red](15,2)(15.5,3)
\psframe[dimen=outer,fillstyle=solid,fillcolor=red](14,2.5)(14.5,3.5)
\psframe[dimen=outer,fillstyle=solid,fillcolor=red](14.5,3)(15.5,3.5)
\psframe[dimen=outer](14,2)(14.5,2.5)
\psframe[dimen=outer](14.5,2)(15,2.5)
\psframe[dimen=outer](0,0)(1.5,1.5)
\psframe[dimen=outer,fillstyle=solid,fillcolor=gray](0.5,0.5)(1,1)
\psframe[dimen=outer](0,0.5)(0.5,1)
\psframe[dimen=outer](0,1)(0.5,1.5)
\psframe[dimen=outer,fillstyle=solid,fillcolor=red](0.5,1)(1.5,1.5)
\psframe[dimen=outer,fillstyle=solid,fillcolor=red](0,0)(1,0.5)
\psframe[dimen=outer,fillstyle=solid,fillcolor=red](1,0)(1.5,1)
\psframe[dimen=outer](2,0)(3.5,1.5)
\psframe[dimen=outer,fillstyle=solid,fillcolor=gray](2.5,0.5)(3,1)
\psframe[dimen=outer](3,0.5)(3.5,1)
\psframe[dimen=outer](3,1)(3.5,1.5)
\psframe[dimen=outer,fillstyle=solid,fillcolor=red](2,1)(3,1.5)
\psframe[dimen=outer,fillstyle=solid,fillcolor=red](2,0)(2.5,1)
\psframe[dimen=outer,fillstyle=solid,fillcolor=red](2.5,0)(3.5,0.5)
\psframe[dimen=outer](4,0)(5.5,1.5)
\psframe[dimen=outer,fillstyle=solid,fillcolor=gray](4.5,0.5)(5,1)
\psframe[dimen=outer](4.5,0)(5,0.5)
\psframe[dimen=outer](5,0)(5.5,0.5)
\psframe[dimen=outer,fillstyle=solid,fillcolor=red](4,0)(4.5,1)
\psframe[dimen=outer,fillstyle=solid,fillcolor=red](5,0.5)(5.5,1.5)
\psframe[dimen=outer,fillstyle=solid,fillcolor=red](4,1)(5,1.5)
\psframe[dimen=outer](6,0)(7.5,1.5)
\psframe[dimen=outer,fillstyle=solid,fillcolor=gray](6.5,0.5)(7,1)
\psframe[dimen=outer](6,0)(6.5,0.5)
\psframe[dimen=outer](6,0.5)(6.5,1)
\psframe[dimen=outer,fillstyle=solid,fillcolor=red](6,1)(7,1.5)
\psframe[dimen=outer,fillstyle=solid,fillcolor=red](7,0.5)(7.5,1.5)
\psframe[dimen=outer,fillstyle=solid,fillcolor=red](6.5,0)(7.5,0.5)
\psframe[dimen=outer](8,0)(9.5,1.5)
\psframe[dimen=outer,fillstyle=solid,fillcolor=gray](8.5,0.5)(9,1)
\psframe[dimen=outer](8,1)(8.5,1.5)
\psframe[dimen=outer](8.5,1)(9,1.5)
\psframe[dimen=outer,fillstyle=solid,fillcolor=red](8.5,0)(9.5,0.5)
\psframe[dimen=outer,fillstyle=solid,fillcolor=red](9,0.5)(9.5,1.5)
\psframe[dimen=outer,fillstyle=solid,fillcolor=red](8,0)(8.5,1)
\psframe[dimen=outer](10,0)(11.5,1.5)
\psframe[dimen=outer,fillstyle=solid,fillcolor=gray](10.5,0.5)(11,1)
\psframe[dimen=outer,fillstyle=solid,fillcolor=red](10.5,0)(11.5,0.5)
\psframe[dimen=outer,fillstyle=solid,fillcolor=red](11,0.5)(11.5,1.5)
\psframe[dimen=outer,fillstyle=solid,fillcolor=red](10,0)(10.5,1)
\psframe[dimen=outer,fillstyle=solid,fillcolor=red](10,1)(11,1.5)
\psframe[dimen=outer](12,0)(13.5,1.5)
\psframe[dimen=outer,fillstyle=solid,fillcolor=gray](12.5,0.5)(13,1)
\psframe[dimen=outer,fillstyle=solid,fillcolor=red](12,0.5)(12.5,1.5)
\psframe[dimen=outer,fillstyle=solid,fillcolor=red](13,0)(13.5,1)
\psframe[dimen=outer,fillstyle=solid,fillcolor=red](12.5,1)(13.5,1.5)
\psframe[dimen=outer,fillstyle=solid,fillcolor=red](12,0)(13,0.5)
\end{pspicture}
\label{fig:tiling2}
\end{figure}

\FloatBarrier

\end{document}